\newtheorem{theorem}{Theorem}[section]
\newtheorem{lemma}[theorem]{Lemma}
\theoremstyle{definition}
\date{2011}
\title{The Poisson boundary of a locally discrete group 
of diffeomorphisms of the circle}
\author{Bertrand Deroin}
\begin{document}
\maketitle
\begin{abstract} 
We compute the Poisson boundary of locally discrete groups of diffeomorphisms of the circle. 
\end{abstract}

\begin{small}

\section{Introduction} 

In this work, we study \textit{non elementary} subgroups of the group of diffeomorphisms of the circle ${\bf S}^1 = {\bf R} / {\bf Z}$, i.e. acting on the circle without preserving any probability measure. For such a group, there is a unique invariant compact set in which every orbit is dense, called the \textit{limit set}. Either this set is the whole circle, or it is a Cantor set, see for instance~\cite{Ghys}. Our goal is to understand some aspects of the dynamics of a non elementary subgroup of the group of diffeomorphisms of the circle on its limit set, in the case that the group has some discreteness properties. 

Given two integers $l\leq k$ (with $k$ possibly $\infty,\omega$), a non elementary subgroup $G$ of the group of $C^k$-diffeomorphisms of the circle 
is called \textit{locally discrete} (resp. \textit{strongly locally discrete}) in the $C^l$-topology if there is a covering of the limit set  
by intervals $I_i$ such that if a sequence of elements $g_n$ of $G$ 
converges to the identity in the $C^l$-topology on one of the $I_i$'s, then the restriction of $g_n$ to $I_i$ (resp. $g_n$) 
is identically identity when $n$ is large enough. If $k=\omega$, then the $C^{\omega}$-topology is defined in the sense of Ghys, see~\cite{ghys2}: namely, a sequence of diffeomorphisms $g_n$ of the circle converge to the identity in the $C^{\omega}$-topology if there 
exists a compact neighborhood $V$ of ${\bf R}/{\bf Z}$ in the annulus ${\bf C}/{\bf Z}$, such that every $g_n$ extends as an injective holomorphic map $\widetilde{g_n}: V \rightarrow {\bf C} \backslash {\bf Z}$, which converges to the identity uniformly on $V$ when $n$ tends to infinity.

As an example, a non elementary subgroup of $\mathrm{Diff}^{\omega}( {\bf S}^1)$ whose limit set is a Cantor set is automatically locally discrete in the $C^{\omega}$-topology. In fact if a non elementary subgroup of analytic diffeomorphisms of the circle is not locally discrete in the $C^{\omega}$-topology, then the action is
\begin{itemize} 
\item \textit{minimal}, i.e. every orbit is dense, 
\item \textit{ergodic} with respect to the  
Lebesgue measure, i.e. every $G$-invariant Borel set has Lebesgue measure $0$ or $1$, and
\item \textit{measurably rigid}, namely every action $G\rightarrow \mathrm{Diff}^{\omega} ({\bf S^1})$ which is measurably conjugated to the representation $\mathrm{id}_G$ is in fact conjugated to it by an analytic diffeomorphism, see~\cite{Rebelo}.
\end{itemize} 
These properties follows from the fact that there exist local analytic flows which are limits of elements of the group in the $C^{\omega}$-topology, 
see the combination of the works~\cite{ghys2,nakai,LR}, and~\cite{DKN} for the existence of hyperbolic fixed points. It is an interesting problem to study the groups of diffeomorphisms that do not satisfy one of these properties, and to try to classify them up to some equivalence relations.  




Our purpose here is understand the action of a strongly locally discrete group of diffeomorphisms on its limit set, up to measurable conjugacy. For instance, suppose that $G$ is a finitely generated subgroup of the group of diffeomorphisms of the circle which is strongly locally discrete and Gromov hyperbolic. Let $\partial G$ be the geometric boundary of $G$, $\mu$ a probability measure on $G$ and $\nu$ the stationary measure on $\partial G$, i.e. the probability measure $\nu$ such that $\int _G g \nu\ d\mu(g) = \nu$. If we denote by $\nu'$ the stationary measure on the circle, then we prove that there exists a $G$-equivariant measurable isomorphism between $(\partial G, \nu)$ and 
a finite $G$-equivariant quotient of $({\bf S}^1,\nu')$.  

The existence of a measurable isomorphism is the consequence of a statement valid for every pair $(G, \mu)$, without assuming $G$ to be Gromov hyperbolic. Namely, we prove that if $G$ is a non elementary strongly locally discrete subgroup of the group of  diffeomorphisms of the circle, then the Poisson boundary of the pair $(G, \mu)$ is a finite quotient of the circle, see theorem~\ref{T: poisson boundary} for the precise regularity assumptions. Then, the previously announced statement is a consequence of a theorem by Kaimanovich~\cite{Kaimanovich} stating that a model for the Poisson boundary of a Gromov hyperbolic group is indeed its geometric boundary.

The proof is based on our work~\cite{DKN} in collaboration with Kleptsyn and Navas, showing that if a group of homeomorphisms of the circle is non elementary, then a finite quotient of the circle is a boundary for the $\mu$-random walk on $G$, see section~\ref{ss: the circle as a boundary}. The argument then follows essentially the same strategy as the one used by Ledrappier in~\cite{L2} in the case of a lattice contained in $\mathrm{PSL}(2,{\bf C})$, using the entropy criterion of Derriennic and Kaimanovich/Vershik, see~\cite{KV, Derriennic}. The difficulty is then to control the affine/projective distortions of the random compositions, depending of the degree of discreteness that we have at hand.



\subsection{The circle as a probabilistic boundary} \label{ss: the circle as a boundary}

Let $G$ be a non elementary countable subgroup of the group of homeomorphisms of the circle ${\bf S}^1 = {\bf R}/{\bf Z}$. Recall that $G$ is called non elementary if it does not preserve a probability measure on ${\bf S}^1$. 
Let $\mu$ be a probability measure on $G$ whose support generates $G$ as a semi-group, and let $\nu$ 
be a stationary measure on the circle, that is, a probability measure such that 
$\int g \nu\ d\mu(g) = \nu$. Such a measure is unique, see~\cite[Proposition 5.5]{DKN}.  

Consider the right random walk on $G$ defined by 
\[ {\bf r_n} = g_1\ldots g_n,\]
where $g_n$ is a sequence of $G$-valued independant random variables with distribution $\mu$. 
For instance $g_n$ is the $n$-th coordinate of the probability space $(G^{\bf N},\mu^{\bf N})$.  
Furstenberg observed~\cite{Furstenberg} that for $\mu^{\bf N}$-almost every ${\bf g}\in G^{\bf N}$ the sequence 
${\bf r_n} \nu$ converges weakly to a probability measure $\nu_{\bf g}$. 
If almost surely the measure $\nu_{\bf g}$ is a Dirac mass, then the couple 
$({\bf S}^1, \nu)$ is called a $(G,\mu)$\textit{-boundary}. 

In this section we review our work~\cite{DKN} in collaboration with Kleptsyn and Navas, where it is shown that a finite equivariant quotient of $({\bf S} ^1 , \nu)$ is a boundary of the pair $(G,\mu)$. 

A well-known condition to ensure that the couple $({\bf S}^1,\nu)$ is a $(G,\mu)$-boundary is that the action is \textit{proximal}, that is, every closed interval can be mapped by elements of $G$ to intervals of arbitrary small length. This criterion was proved the first time by Antonov~\cite{Antonov} (see also the work~\cite{KleptsynNalski} by Kleptsyn-Nalski). The action of a non elementary subgroup of $\mathrm{Homeo}({\bf S}^1)$ is not always proximal. For instance, considering a finite order rotation, the group consisting of the elements which commutes with this rotation is non elementary, but it is not proximal. Another example is obtained by the classical Denjoy's procedure, which consists in blowing-up an orbit (or a countable number of orbits) of a non elementary subgroup of $\mathrm{Homeo}({\bf S}^1)$, i.e. replacing every point of this orbit by an interval. The new action is semi-conjugated to the initial one, but it is no more proximal. Ghys showed that essentially, these are the only examples of non elementary subgroups of $\mathrm{Homeo}({\bf S}^1)$ which does not act proximally on the circle. Let us explain this in details.

First, one can reduce to the case where the action is minimal. Since $G$ has no finite orbit, there is a unique minimal $G$-invariant closed subset of the circle, that we denote by $\mathcal M$, see~\cite[p. 351]{Ghys}. The set $\mathcal M$ is either the whole circle or a Cantor set. 
Elementary arguments involving the maximum principle for $\mu$-harmonic functions, see~\cite[Lemme 5.1]{DKN},
show that the support of any stationary measure $\nu$ on the circle is $\mathcal M$, and that $\nu$ has no atoms. The map $s : x\in {\bf S}^1 \mapsto \nu([0,x]) \in {\bf S}^1$ 
is a non-decreasing continuous map of degree $1$ from the circle to itself. Moreover, it can be shown that there is an action $m: G \times {\bf S}^1 \rightarrow {\bf S}^1 $ of $G$ on the circle, such that $s\circ g = m(g) \circ s$ for every $g$ in $G$, see~\cite[p. 353]{Ghys}. By construction, this new action $m$ is minimal, i.e. every orbit is dense, and because every preimage of $s$ but a countable number is a point, $m$ does not preserve a probability measure on ${\bf S}^1$. 

The second step reduces to the case where the action is proximal. This is an elegant argument of Ghys showing that a non elementary minimal circle action is a finite cover of a proximal circle action, see~\cite[p. 362] {Ghys}. More precisely, suppose that there is a finite order homeomorphism $r : {\bf S}^1 \rightarrow {\bf S}^1$ which commutes to $m(g)$ for every $g$. Observe that a finite order homeomorphism of the circle is topologically conjugated to a rotation, hence the quotient $\pi : {\bf S}^1 \rightarrow  {\bf S} = {\bf S}^1/ r$ is a covering from the circle to a topological space homeomorphic to the circle. Because $r$ commutes with the action $m$, there is an action $m'$ of $G$ on ${\bf S}$ which is such that $\pi \circ m (g) = m'(g) \circ \pi$ for every $g$. Ghys's argument shows that if $m$ is a minimal action of $G$ on ${\bf S}^1$, there is a finite order homeomorphism $r$ such that the induced action $m'$ of $G$ on $\bf S$ is proximal.  

Thus, starting from any action on the circle which does not preserve a probability measure, one may first semi-conjugate 
it to a minimal action, and then consider a finite equivariant quotient 
via a finite covering of the circle that provides a proximal action, and hence a $(G,\mu)$-boundary. 
This quotient will be denoted by $p:{\bf S}^1\rightarrow {\bf S}$, 
(i.e. $p$ is the composition of the semiconjugation $s$ with the finite covering $\pi$), 
and the image of the stationary measure $\nu$ by $p$ will be denoted by $\nu'$. This quotient is finite in the following sense: $\nu'$-almost every point of ${\bf S}$ has $d$ preimages by $p$, where $d$ is the degree of the finite covering $\pi$, or equivalently the order of $r$.  

\subsection{Statement of the result}

For $\rho >0$, let 
\[ A_{\rho} := \{  z\in {\bf C}/ {\bf Z}: \ -\rho < \mathrm{Im} (z) < \rho\} . \]
If $g$ is an analytic diffeomorphism of ${\bf R} / {\bf Z}$, then it extends analytically to an injective holomorphic map $\widetilde{g} : A_{\rho} \rightarrow {\bf R}/{\bf Z}$ for some $\rho >0$. We denote by $\rho(g)$ the supremum of these numbers.

The main result of this note can be stated as follows:

\begin{theorem} \label{T: poisson boundary}
Let $G$ be a countable group of diffeomorphisms of the circle of class $C^{1+\mathrm{holder}}$ and 
$\mu$ be a probability measure on $G$ of finite entropy $-\sum_g \mu(g) \log \mu(g) <\infty$ such that 
\begin{equation} 
\label{eq: holder moment condition}
\int |\log g' |_{\tau} d\mu (g) <\infty,
\end{equation}
for some $\tau >0$, where $|\varphi|_{\tau}= \sup _{x,y} \frac{|\varphi(y) -\varphi(x)|}{|y-x|^{\tau}}$.
Suppose that the support of $\mu$ generates $G$ as a semi-group, and that $G$ does not preserve 
any probability measure on the circle. Then, the $(G,\mu)$-boundary $({\bf S}, \nu)$ is the Poisson boundary
in the following situations: 
\begin{itemize}
\item the action is $C^{1+\mathrm{holder}}$ and strongly locally discrete in the $C^1$-topology.
\item the action is $C^2$ and strongly locally discrete in the $C^2$-topology, and 
\begin{equation}\label{eq: logarithmic moment condition}  \int  |Lg|_{\infty} d\mu(g) <\infty,\end{equation}
where $L g = \frac{g''}{g'} $ is the logarithmic derivative of $g$. 
\item the action is $C^3$ and strongly locally discrete in the $C^3$-topology, and 
\begin{equation}\label{eq: schwarzian moment condition}  \int  |Sg|_{\infty} d\mu(g) <\infty,\end{equation}
where $Sg = (\frac{g''}{g'})' - \frac{1}{2} (\frac{g''}{g'})^2 $ is the Schwarzian derivative of $g$. 
\item the action is analytic and locally discrete in the $C^{\omega}$-topology, and 
\[ \int \frac{1}{\rho (g)} d\mu(g) < \infty \ \ \mathrm{and} \ \ \int |(\log g')' |_{\infty, A_{\rho(g)/2}} d\mu(g) < \infty , \] 
\end{itemize}  
\end{theorem}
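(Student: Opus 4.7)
The approach follows Ledrappier's strategy from~\cite{L2}, based on the entropy criterion of Derriennic and Kaimanovich--Vershik~\cite{KV, Derriennic}: since $({\bf S},\nu)$ is already a $(G,\mu)$-boundary by the discussion of Section~\ref{ss: the circle as a boundary} and $\mu$ has finite entropy, to identify $({\bf S},\nu)$ with the Poisson boundary it suffices to construct, for $\mu^{\bf N}$-a.e.\ trajectory ${\bf g}=(g_1,g_2,\ldots)$ with forward limit $\xi=\xi({\bf g})\in{\bf S}$, a measurable family of candidate sets $E_n(\xi)\subset G$ with $|E_n(\xi)|=e^{o(n)}$ and ${\bf r_n}\in E_n(\xi)$ eventually almost surely. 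These candidates will be built from two ingredients: the exponential contraction of ${\bf r_n}$ on most of the circle, and the (strong) local discreteness of~$G$.

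\textbf{Contraction and distortion.} The results of~\cite{DKN} provide, for a.e.\ ${\bf g}$, an interval $I_n=I_n({\bf g})\subset{\bf S}^1$ with $\nu(I_n)\to 1$ on which ${\bf r_n}$ contracts exponentially, $\mathrm{diam}({\bf r_n}(I_n))\le e^{-\lambda n}$ with $\lambda>0$ the positive Lyapunov exponent, and such that ${\bf r_n}(I_n)$ accumulates at~$\xi$. I would then bound the relevant $C^l$-norm of ${\bf r_n}$ on $I_n$ by exploiting the cocycle satisfied by the appropriate distortion invariant. In the $C^{1+\mathrm{holder}}$ case, the cocycle $\log(gh)'=\log g'\circ h+\log h'$ combined with the H\"older moment hypothesis and the contraction propagates uniform bounds on $|\log{\bf r_n}'|_{\tau,I_n}$. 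In the $C^2$ case, the cocycle $L(gh)=Lg\circ h\cdot h'+Lh$ together with $\int|Lg|_\infty\,d\mu<\infty$ telescopes into a convergent series for $|L{\bf r_n}|_{\infty,I_n}$. In the $C^3$ case, the Schwarzian cocycle $S(gh)=Sg\circ h\cdot(h')^2+Sh$ yields even faster geometric decay. In the analytic case, the extensions $\widetilde{g_i}$ on $A_{\rho(g_i)}$ compose to extend $\widetilde{{\bf r_n}}$ over a definite annulus around~$I_n$ (using the finiteness of $\int d\mu/\rho$), with Koebe-type distortion controlled by the moment on $|(\log g')'|_{\infty,A_{\rho/2}}$.

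\textbf{Counting by discreteness.} Define $E_n(\xi)$ to be the set of $g\in G$ whose jets at a basepoint $x_0\in I_n$ (value, derivative, and in the higher-regularity cases also $Lg(x_0)$ or $Sg(x_0)$) match those of ${\bf r_n}$ up to a tolerance $\epsilon_n$, and which satisfy the same distortion bound as above. The number of $\epsilon_n$-cells is at most polynomial in $1/\epsilon_n$ in the $C^2$, $C^3$ and analytic cases, and sub-exponential (by a Kolmogorov-entropy estimate for the ambient Hölder ball) in the $C^{1+\mathrm{holder}}$ case. If $g,h\in E_n(\xi)$ share a cell, the distortion bound on $I_n$ upgrades the match of jets at $x_0$ to a $C^l$-match on all of $I_n$, so that $g^{-1}\circ h$ is $C^l$-close to the identity on the definite interval $g^{-1}({\bf r_n}(I_n))$; for $n$ large this interval contains some $I_i$ from the covering of the limit set appearing in the definition of (strong) local discreteness, which forces $g^{-1}h=\id$. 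Choosing $\epsilon_n$ to decay polynomially in $n$, compatibly with the at-most-polynomial growth of the distortion bounds, then yields $|E_n(\xi)|=e^{o(n)}$, and the entropy criterion closes the proof.

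\textbf{Main obstacle.} The heart of the argument is the joint calibration between the distortion estimate and the counting: the $C^l$-norm of ${\bf r_n}|_{I_n}$ must grow only polynomially (or slowly sub-exponentially) in $n$, so that $\epsilon_n$ can be chosen polynomially small while still leaving enough room for $g^{-1}h$ to fall inside the definite $C^l$-neighborhood of $\id$ where (strong) local discreteness applies. Each moment hypothesis in the theorem is precisely what makes this calibration work in its regularity class. The analytic case is the most subtle: the effective analyticity strip of $\widetilde{{\bf r_n}}$ can shrink along the random walk, and one must control both that shrinkage (via $\int d\mu/\rho<\infty$) and the holomorphic distortion simultaneously; here mere local (rather than strong local) discreteness is enough, since the identity principle already forbids a non-trivial analytic diffeomorphism of~${\bf S}^1$ from being the identity on an interval.
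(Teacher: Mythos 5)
Your overall framework (identify $({\bf S},\nu)$ via an entropy/localization criterion, using discreteness to bound the number of candidates for the position of the walk) is a legitimate general strategy, but it is not the paper's route, and it has a gap at its central step: the claim that two elements $g,h$ of the same $\epsilon_n$-cell must satisfy $g^{-1}h=\id$ by (strong) local discreteness. To invoke discreteness you need $g^{-1}h$ to be $C^l$-close to the identity, with closeness below a fixed threshold (ultimately tending to $0$), on a \emph{fixed} interval $I_i$ of the covering of the limit set. Matching jets at $x_0$ plus the distortion estimate only give $\lvert \log ((g^{-1}h)')\rvert \le 2\kappa + \epsilon_n$ on $I_n$, where $\kappa$ is the distortion bound of the random compositions on $I_n$. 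But the distortion bound of lemma~\ref{L: affine distortion} is small only on intervals of small radius $r(\kappa)$; on a macroscopic interval (one large enough to contain a fixed $I_i$) the achievable $\kappa$ is bounded away from $0$ uniformly in $n$. Hence $g^{-1}h$ is only within a fixed, non-small $C^1$-distance of the identity, discreteness cannot be applied, and your cells may contain many distinct group elements. (On the shrinking interval ${\bf r_n}(I_n)$, where the geometry is better, no fixed $I_i$ is contained for large $n$.)

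The paper's proof contains exactly the device needed to close this gap, and it is the heart of the argument: one works at a hyperbolic fixed point $x$ of some $l\in G$ with $l'(x)=\alpha<1$ (existence from \cite[Th\'eor\`eme F]{DKN}), linearizes $l$ in the relevant regularity (Sternberg/Chaperon, Yoccoz, or Koenigs), and considers the shrinking intervals $I_m=l^m(I)$. On $I_m$ the guaranteed distortion bound $\kappa_m$ tends to $0$; precomposing with the \emph{affine} map $l^m$ transports this to the fixed interval $I$, so that $g_m=\overline{g_m}\circ l^m$ and $h_m=\overline{h_m}\circ l^m$ have distortion $\kappa_m\to 0$ on $I$ and $h_m^{-1}\circ g_m\to\id$ in the $C^1$ (then $C^2$, $C^3$, $C^\omega$) topology on $[-\eta/2,\eta/2]$. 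This forces a two-scale limit ($n\to\infty$ for each fixed $m$, then $m\to\infty$) that is absent from your plan. A second structural difference: the paper does not pin down the position $g(x_0)$ by metric cells around $\xi$; it produces two \emph{distinct} elements with intersecting images $g_m(I_m)\cap h_m(I_m)\neq\emptyset$ by a measure-theoretic pigeonhole, $\sum_g \nu(g(I_{2m}))>1$, and this is precisely where the entropy gap $h_{\nu}+\varepsilon<h(G,\mu)$ enters (the paper argues the contrapositive: $h_{\nu}<h(G,\mu)$ implies non-discreteness). Your proposal has no mechanism playing this role, and the Kolmogorov-entropy count you invoke in the H\"older case is both unnecessary (only finite-dimensional jet data need be subdivided, thanks to the distortion control) and, as stated, not obviously sub-exponential.
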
 

The proofs of the theorem will be done in the case where $G$ is a subgroup of preserving orientation diffeomorphisms. This does not affect the result since if $H\subset G$ is a finite index subgroup of $G$, and if $\mu_H$ is the balayage of $\mu$ on $H$, then the Poisson boundary of $(G,\mu)$ is the Poisson boundary of $(H,\mu_H)$. 

\subsection{Acknowledgments.} 
This work is a continuation of our joint work~\cite{DKN} in collaboration with Victor Kleptsyn and Andr\'es Navas. The problem of determining the Poisson boundary of groups of circle diffeomorphisms was raised during our common visit to the IHES in 2006. I would like to thank this institution for the very nice working conditions that was offered to us. 

It is a pleasure to thank the department of mathematics of Tokyo University, for his warm hospitality during the period this work was written, in december 2008, and for letting me the opportunity to give lectures on this subject. 

I also thank Andr\'es Navas and the referee for their careful reading. 

\section{Entropy criterion} \label{S: entropy criterion}

In the case where the entropy $H(\mu)= -\sum_g \mu(g) \log\mu(g)$ of $\mu$ is finite, there is a useful criterion 
to determine whether a $(G,\mu)$-boundary is the Poisson boundary. Recall that the asymptotic entropy of the pair 
$(G,\mu)$ is defined by 
\[  h(G,\mu) = \lim _n \frac{1}{n} H(\mu^{*n}) \]
which converges because the sequence $H(\mu^{*n})$ is sub-additive, see~\cite{Avez}. 
The significance of the asymptotic entropy is given by the Shannon-Breiman-McMillan theorem 
which says that for $\mu^{\bf N}$-a.e. ${\bf g}\in G^{\bf N}$, the sequence 
$-\frac{1}{n}\log \mu^{*n} ({\bf r_n})$ converges to $h(G,\mu)$ in $L^1(\mu^{\bf N})$. Thus, 
if $E_n$ is a sequence of sets contained in $G$ with $\mu^{*n}(E_n)\geq 1/2$, then 
\[  \liminf_n \frac{\log |E_n|}{n} \geq h(G,\mu),\]
where $|E|$ denotes the cardinal of a set $E$.  

Furstenberg introduced the real number 
\[  h_{\nu'} := - \int_{\bf S} \log \frac{d g^{-1} \nu '} {d\nu' } (x) d\mu(g)d\nu'(x) ,\]
called the \textit{boundary entropy}. The inequalities $0\leq h_{\nu'} \leq h(G,\mu)$ follows from the fact that the logarithm is concave. Independently, Derriennic~\cite{Derriennic} and Kaimanovich-Vershik~\cite{KV} showed that 
the $(G,\mu)$-boundary $({\bf S}, \nu')$ is the Poisson boundary if and only if $h_{\nu'}= h(G,\mu)$.

The boundary entropy $h_{\nu'}$ has a dynamical significance. To see this, it will be convenient  
to introduce the number 
\[  h_{\nu} = - \int _{{\bf S} ^1} \log \frac{d g^{-1} \nu} {d\nu} (x) d\mu(g)d\nu(x) ,\]
and to observe that $h_{\nu} = h_{\nu'}$. This is indeed a consequence of the fact 
that the stationary measure on the circle ${\bf S}^1$ after semi-conjugation is unique, and hence invariant 
by the finite order rotation $r$ that defines the covering $\pi$, see~\cite[Proposition 5.5]{DKN}. 
Consider the left random walk on $G$ defined by 
\[  {\bf l_n} = g_n \ldots g_1.\]
Since for every integer $n$ and $\nu$-almost every $x$, we have 
\[ \log \frac {d {\bf l_n} ^{-1} \nu}{d\nu}(x) = \log \frac {d  g_1 ^{-1} \nu}{d\nu}(x) + \ldots + \log \frac {d g_n ^{-1} \nu}{d\nu}(l_{n-1}(x)) \]
the random ergodic theorem shows that for $\mu^{\bf N}$-a.e. ${\bf g} \in G^{\bf N}$ we have 
\[  \lim_{n\rightarrow \infty} \frac{1}{n}\log \frac {d {\bf l_n} ^{-1} \nu}{d\nu}(x) = -h_{\nu},\]
the convergence being in $L^1(\nu)$. 
Thus, if $J$ is an interval with $\nu(J) >0$, we get almost surely 
\[ \liminf_n \log \frac{\nu({\bf l_n} J)} {\nu(J)} = \liminf_n \log \int_J \frac{d {\bf l_n}^{-1} \nu}{d\nu}(x) \frac{d\nu(x)}{\nu(J)} 
\geq \int_J \lim \log \frac{d {\bf l_n}^{-1} \nu}{d\nu}(x) \frac{d\nu(x)}{\nu(J)} = -h_{\nu}.\]
If $\varepsilon >0$, then for every ${\bf g}\in G^{\bf N}$ we define the number 
\begin{equation} 
\label{eq: constant C1} 
C_1(\varepsilon, J, {\bf g}) := \inf _{n\geq 0} \big( \nu({\bf l_n} J) \exp ((h_{\nu}+\varepsilon) n) \big),
\end{equation} 
which is positive a.s.

\section{Lyapunov exponent and affine distortion: proof of the first part of the theorem} \label{S: first part of the theorem}

Let $G$ be a countable group of diffeomorphisms of the circle of class 
$C^{1}$ preserving the orientation. Suppose that there is no $G$-invariant probability measure 
on the circle. Let $\mu$ be a probability measure on $G$ whose support generates 
$G$ as a semi-group, and such that
\[\int |\log g'|_{\infty} d\mu(g)<\infty.\] 
By a theorem of Baxendale~\cite{Baxendale}, there exists a 
stationary measure $\nu$ such that the sum of the Lyapunov exponents is negative. The circle being of dimension $1$, there is only one Lyapunov exponent, whose expression is 
\[  \lambda(\nu) := \int \log g'(x) d\mu(g)d\nu(x).\]
Moreover, the stationary measure $\nu$ is unique, hence we have $\lambda := \lambda(\nu) <0$. An alternative proof of this fact can be found in~\cite[Proposition 5.9]{DKN} in the case where $\mu$ is symmetric.

The random ergodic theorem asserts that for $\nu$-a.e. $x$, 
and $\mu ^{\bf N}$-a.e. ${\bf g} = (g_n)_n \in G^{\bf N}$, we have 
\begin{equation} \label{eq: lyapunov exponent}
 \lim _{n\rightarrow \infty} \frac{1}{n} \log {\bf l_n} ' (x) = \lambda ,
\end{equation}
where ${\bf l_n} := g_n \ldots g_1$. In fact,  
equation (\ref{eq: lyapunov exponent}) holds for \textit{every} $x$ and $\mu^{\bf N}$-a.e. ${\bf g}\in G^{\bf N}$
(the reader can deduce these facts by the technique of distortions that will be explained hereafter; however we will not 
emphasize on this. See~\cite{DK,DKN} for precisions on these facts). Then, for any $x\in {\bf S}^1$, and $\mu^{\bf N}$-a.e. ${\bf g} \in G^{\bf N}$, 
there is a constant $C\geq 1$ such that 
\begin{equation}\label{eq: constant C2}  
\forall n\geq 0:\ \ \ \ \ \ \ \ \frac{1}{C}\exp( 3 n \lambda /2) \leq {\bf l_n} '(x) \leq C\exp(n\lambda/2).
\end{equation}
We will denote by $C_2(x,{\bf g})\geq 1$ the infimum of the numbers $C$ verifying (\ref{eq: constant C2}). 

If $I$ is an interval of the circle, and $g$ is an element of $G$, we define the \textit{affine distortion} 
of $g$ in $I$ by 
\begin{equation}\label{def: distortion}  \kappa(g,I) := \sup _{x,y\in I} \log (\frac{g'(y)}{g'(x)} ).\end{equation}
The following lemma asserts that if $\tau>0$ and $G$ acts by $C^{1+\tau}$-diffeomorphisms, we have a control on the distortion 
of the maps ${\bf l_n}$ in a neighborhood of $x$, whose size is independent of~$n$. 

\begin{lemma} \label{L: affine distortion}
Let $\tau>0$. Suppose that $G$ acts by $C^{1+\tau}$ diffeomorphisms on the circle, without preserving any probability measure, and 
furthermore that the following moment condition is satisfied: 
\begin{equation}\label{eq: moment condition}  
\int |\log g'|_{\tau} d\mu (g) <\infty ,
\end{equation} 
where $|\varphi|_{\tau} = \sup _{x,y} \frac{|\varphi(x)-\varphi(y)|}{|x-y|^{\tau}}$. Then a.s. 
\[ C_3({\bf g}) := \sum_n  |\log g'_{n+1}|_{\tau} \exp(n\lambda \tau / 2) <\infty.\]
Let $\kappa >0$ be a number and let $r= r(\kappa,x,{\bf g}) >0$ be the number defined by the equation  
\[ r=\frac{\kappa^{1/\tau}e^{-\kappa}}{C_2(x,{\bf g}) C_3({\bf g})^{1/\tau}},\]
where $C_2(x,{\bf g})$ is defined in (\ref{eq: constant C2}).
For every $x\in {\bf S}^1$, $\mu^{\bf N}$-a.e. ${\bf g}=(g_n)\in G^{\bf N}$, and every integer $n\geq 0$,  
\[  \kappa ({\bf l_n}, [x-r,x+r])\leq \kappa .\]
\end{lemma}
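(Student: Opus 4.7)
The proof naturally splits into the two claims of the lemma, with the first serving the second.

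For the finiteness of $C_3({\bf g})$, I would simply take expectation. Since the random variables $g_{n+1}$ are i.i.d. with law $\mu$, linearity gives
\[
\mathbb{E}\bigl[C_3({\bf g})\bigr] \;=\; \Bigl(\int |\log g'|_{\tau}\, d\mu(g)\Bigr) \sum_{n\ge 0} \exp(n\lambda\tau/2).
\]
The moment condition (\ref{eq: moment condition}) controls the first factor and $\lambda<0$ makes the geometric series converge, so the expectation is finite and thus $C_3({\bf g})<\infty$ almost surely.

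For the distortion bound, my plan is a bootstrap/induction on $n$ using the chain rule. Writing ${\bf l_n}=g_n\circ\cdots\circ g_1$, we have
\[
\log\frac{{\bf l_n}'(y)}{{\bf l_n}'(x)} \;=\; \sum_{k=0}^{n-1}\Bigl(\log g'_{k+1}({\bf l_k}(y))-\log g'_{k+1}({\bf l_k}(x))\Bigr),
\]
and the Hölder estimate bounds the $k$-th term by $|\log g'_{k+1}|_\tau\,|{\bf l_k}(y)-{\bf l_k}(x)|^\tau$. The key is to control $|{\bf l_k}(y)-{\bf l_k}(x)|$ via the mean value theorem, which requires a bound on ${\bf l_k}'$ on the interval $[x-r,x+r]$. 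This is precisely where the induction enters: if one has already shown $\kappa({\bf l_k},[x-r,x+r])\le\kappa$ for all $k<n$, then $\sup_{z\in[x-r,x+r]}{\bf l_k}'(z)\le e^{\kappa}{\bf l_k}'(x)$, and combining with the Lyapunov bound (\ref{eq: constant C2}) gives
\[
|{\bf l_k}(y)-{\bf l_k}(x)| \;\le\; e^{\kappa}\,C_2(x,{\bf g})\,\exp(k\lambda/2)\,|y-x|.
\]

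Plugging this into the chain-rule sum and using $|y-x|\le 2r$ collapses the distortion estimate at step $n$ to
\[
\kappa({\bf l_n},[x-r,x+r]) \;\le\; e^{\kappa\tau}\,C_2(x,{\bf g})^{\tau}\,(2r)^{\tau}\sum_{k=0}^{n-1}|\log g'_{k+1}|_\tau\exp(k\lambda\tau/2)\;\le\;e^{\kappa\tau}\,C_2^{\tau}\,(2r)^{\tau}\,C_3({\bf g}).
\]
The choice of $r$ in the statement is calibrated precisely so that the right-hand side equals $\kappa$ (up to a harmless absolute constant in the factor of $2$), closing the induction and giving the claimed bound for all $n\ge 0$.

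The main subtlety is that the estimate is circular: controlling the Hölder contribution at step $k$ requires knowing ${\bf l_k}'$ on the whole interval, which in turn requires a distortion bound at step $k$. Making the induction close with the explicit algebraic form of $r$ (rather than, say, proving it only for sufficiently small $r$) is the step that needs the most care; everything else reduces to the telescoped chain rule, the Hölder estimate, and the Lyapunov bound from (\ref{eq: constant C2}).
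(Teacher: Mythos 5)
Your proof matches the paper's almost line for line: Fubini/expectation for the a.s.\ finiteness of $C_3({\bf g})$, then induction on $n$ via the telescoped chain rule, the H\"older bound on each factor, and the Lyapunov estimate (\ref{eq: constant C2}) fed back through the inductive distortion hypothesis. The only cosmetic difference is that the paper measures distances to the center $x$ (so $|y-x|\le r$ rather than $2r$), which makes the algebra close exactly with the stated value of $r$; your extra factor of $2^{\tau}$ is harmless since $\kappa$ is arbitrary.
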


\begin{proof} 
By the moment condition (\ref{eq: moment condition}), we have 
\[ \int C_3({\bf g}) d\mu^{\bf N} ({\bf g}) = \sum_n \exp(n\lambda \tau/2) \int |\log g'|_{\tau} d\mu(g) <\infty,\]
so that for $\mu^{\bf N}$-a.e. ${\bf g}\in G^{\bf N}$, $C_3({\bf g})$ is finite. This proves the first part of the lemma.

We denote $I=[x-r,x+r]$. 
We prove by induction that $\kappa({\bf l_n}, I) \leq \kappa$ for any $n\geq 0$ with the convention $l_0= id$. This is clear for $n=0$.
Suppose this is true for $k=0,\ldots, n$. For every $k\geq 0$, denote $x_k={\bf l}_k(x)$ and $y_k={\bf l_k}(y)$ if $y\in I$. We have for every $y\in I$
\[ \frac{{\bf l_{n+1}}'(y)} {{\bf l_{n+1}}'(x)} = \prod_{0\leq k\leq n} \frac {g'_{k+1} (y_k)}{g'_{k+1}(x_k)} \]
For every $k=0,\ldots,n$, because $\kappa({\bf l_k}, [x-r,x+r])\leq \kappa$,  
\[  |y_k - x_k| \leq  |\int _x ^y {\bf l_k}' (u) du | \leq r e^{\kappa} C_2(x,{\bf g}) \exp(k\lambda/2).\]  
Thus, 
\[ |\log  \frac{{\bf l_{n+1}}'(y)} {{\bf l_{n+1}}'(x)}| \leq \sum_{0\leq k\leq n} |\log g_{k+1}'|_{\tau} \cdot |y_k - x_k |^{\tau} 
\leq (re^{\kappa} C_2(x,{\bf g}) )^{\tau} C_3({\bf g}) \leq \kappa,\]
which completes the proof of the lemma.   
\end{proof}  

We will now prove that if $h_{\nu} < h(G,\mu)$, then the group $G$ 
is not strongly locally discrete in the $C^1$-topology. This and the entropy criterion (see section~\ref{S: entropy criterion}) will establish the first part of the 
theorem. We fix a number 
\[ 0<\varepsilon < h(G,\mu) - h_{\nu}.\]  

If the group $G$ does not preserve a probability measure on the circle, then 
there exists a point $x$ in the limit set 
and an element $l \in G$ such that $l(x)=x$ and $\alpha = l'(x) <1$, see \cite[Th\'eor\`eme F]{DKN} or \cite{N}. 
By the $C^{1+\tau}$-version of Sternberg linearization theorem~\cite{Chaperon},  
there is a germ of diffeomorphism $\varphi : ({\bf S}^1,x)\rightarrow ({\bf R}, 0)$ of class $C^{1+\tau}$ 
such that $\varphi \circ l  = l'(x) \varphi$. Thus by changing coordinates and keeping the fact that the action is 
$C^{1+\tau}$, we can suppose that the element $l\in G$ fixes $0\in {\bf S}^1$, and that there exists a positive number $\eta >0$ 
such that for every $y\in [-\eta,\eta]$, 
\[  l(y) = \alpha y.\]
We define $I:=[-\eta,\eta]$, and for every integer $m\geq 0$, $I_m : = l^m (I) $.

\begin{lemma} \label{L: construction of a sequence}
There exists an integer $m_0 \geq 0$ such that for every $m\geq m_0$, there exist two distinct elements $g_m$ and $h_m$ of $G$ such that  
\begin{enumerate} 
\item $\kappa (g_m , I)$ and $\kappa(h_m, I)$ tend to $0$ when $m$ tends to infinity,
\item for every $m\geq m_0$, $\lvert \log \frac{g_m '(0)}{h_m'(0)} \rvert \leq \frac{1}{m}$, 
\item for every $m\geq m_0$, the intervals $g_m(I_m)$ and $h_m(I_m)$ intersect. 
\end{enumerate}   
\end{lemma}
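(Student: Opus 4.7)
The plan is to exhibit, for each large $m$, elements of the shape $g_m = w_1 \circ l^{M}$ and $h_m = w_2 \circ l^{M}$, where $M = M(m) \to \infty$ (e.g.\ $M = m$) and $w_1 \neq w_2$ are two distinct length-$n$ words in $\mathrm{supp}(\mu^{*n})$, with $n = n(m)$ chosen very large depending on $m$. The composition by $l^{M}$ shrinks the fixed interval $I$ onto the tiny $l^{M}(I) = I_{M}$, on which the affine distortion of a typical random word $w$ is uniformly controlled by a scale-refined form of Lemma~\ref{L: affine distortion}; this will yield property~(1). The pair $(w_1,w_2)$ will be extracted by pigeonholing a large pool of good words into narrow logarithmic-derivative bins of width $1/m$, and observing that the $\nu$-measures of the images $w(I_{m+M})$ must overlap somewhere, since $h(G,\mu) > h_{\nu}$.

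\textbf{Setup.} Fix $\varepsilon > 0$ with $h(G,\mu) - h_{\nu} > \varepsilon$, apply Lemma~\ref{L: affine distortion} at $x = 0$ with $\kappa_0 = 1$, and use Egorov to pick $r_0 > 0$ and $K_0 < \infty$ such that
\[ \mathcal{A} := \bigl\{{\bf g} : r(\kappa_0,0,{\bf g}) \geq r_0,\ (C_2(0,{\bf g}) e^{\kappa_0})^\tau C_3({\bf g}) \leq K_0 \bigr\} \]
has $\mu^{\bf N}$-measure $\geq 9/10$. Rerunning the induction in Lemma~\ref{L: affine distortion} with $[x-r,x+r]$ replaced by $[-s,s]$ for any $s \leq r_0$ gives, on $\mathcal{A}$, the scale-refined bound $\kappa({\bf l_n}, [-s,s]) \leq K_0 s^\tau$ for all $n \geq 0$. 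Set $M = m$; for $m$ large enough that $\alpha^m \eta \leq r_0$, fix $c_m > 0$ with $\mathbb{P}(C_1(\varepsilon/8, I_{m+M}, {\bf g}) \geq c_m) \geq 9/10$ (possible by (\ref{eq: constant C1}) and Egorov, since $C_1 > 0$ a.s.). Define $W_n$ to be the set of words $w \in G$ for which \emph{some} trajectory ${\bf g}$ with ${\bf l_n}({\bf g}) = w$ satisfies simultaneously: (i) ${\bf g} \in \mathcal{A}$; (ii) $|\tfrac{1}{n}\log w'(0) - \lambda| \leq \varepsilon/8$; (iii) $\mu^{*n}(w) \leq e^{-n(h(G,\mu) - \varepsilon/8)}$; (iv) $C_1(\varepsilon/8, I_{m+M}, {\bf g}) \geq c_m$, which forces $\nu(w(I_{m+M})) \geq c_m e^{-n(h_{\nu} + \varepsilon/8)}$. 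By the random ergodic theorem, Shannon--Breiman--McMillan, and the boundary-entropy inequality, these four events have joint probability $\geq 1/2$ for $n$ large; combined with (iii) this gives $|W_n| \geq \tfrac{1}{2}e^{n(h(G,\mu) - \varepsilon/8)}$.

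\textbf{Pigeonhole.} Partition $W_n$ into bins of width $1/m$ in the variable $\log w'(0)$; by (ii) the number of non-empty bins is $O(nm)$, so some bin $B$ has $|B| \gtrsim e^{n(h(G,\mu) - \varepsilon/8)}/(nm)$. Summing (iv) over $B$,
\[ \sum_{w \in B} \nu(w(I_{m+M})) \geq |B|\,c_m\,e^{-n(h_{\nu}+\varepsilon/8)} \gtrsim \frac{c_m}{nm}\, e^{n(h(G,\mu) - h_{\nu} - \varepsilon/4)} \geq \frac{c_m}{nm}\, e^{3n\varepsilon/4}. \]
For $n = n(m)$ large enough, the right-hand side exceeds $1 = \nu({\bf S}^1)$, which forces two distinct $w_1, w_2 \in B$ with $w_1(I_{m+M}) \cap w_2(I_{m+M}) \neq \emptyset$. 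Set $g_m := w_1 \circ l^{M}$ and $h_m := w_2 \circ l^{M}$; these are distinct elements of $G$. Since $l$ is affine of slope $\alpha$ on $I$, one has $g_m'(0)/h_m'(0) = w_1'(0)/w_2'(0)$ and $\kappa(g_m, I) = \kappa(w_1, I_{M}) \leq K_0(\alpha^{M} \eta)^\tau$, similarly for $h_m$. Property (1) follows from $M = m \to \infty$; (2) from membership in the common bin $B$; (3) from the identities $g_m(I_m) = w_1(I_{m+M})$ and $h_m(I_m) = w_2(I_{m+M})$ together with the overlap just produced.

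\textbf{Main obstacle.} The real calibration issue is that the native conclusion of Lemma~\ref{L: affine distortion} only controls distortion on a trajectory-dependent scale $r(\kappa,0,{\bf g})$ which degenerates as $\kappa \to 0$; so extracting the scale-refined estimate $\kappa({\bf l_n}, [-s,s]) \leq K_0 s^\tau$ with an Egorov-type \emph{uniform} constant $K_0$ is essential. Once this is secured, inserting $l^{M}$ with $M$ a function only of $m$ decouples the distortion from the word length $n$, which is what frees $n$ to be taken as large as the double pigeonhole (derivative bin $\times$ $\nu$-overlap) demands.
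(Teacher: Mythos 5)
Your proposal is correct and follows essentially the same strategy as the paper's proof: precompose random words with $l^m$ to shrink $I$ to a scale where Lemma~\ref{L: affine distortion} forces vanishing distortion, bin the words by $\log w'(0)$ in windows of width $1/m$, and use the entropy gap $h(G,\mu)-h_{\nu}>\varepsilon$ together with the lower bound $\nu(w(I_{2m}))\geq c_m e^{-n(h_{\nu}+\varepsilon')}$ to make the total mass exceed $1$ and pigeonhole an overlapping pair within a single bin. Your ``scale-refined'' bound $\kappa({\bf l_n},[-s,s])\leq K_0 s^{\tau}$ is just a repackaging of the paper's choice of $\kappa_m$ as the small root of $\kappa_m^{1/\tau}e^{-\kappa_m}=|I_m|C_2C_3^{1/\tau}$, and is justified by the same induction.
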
 

\begin{proof} Fix some constants $C_1, C_2,C_3>0$ such that there is a set $\mathcal G_m \subset G^{\bf N}$
of positive measure $\mu^{\bf N}(\mathcal G_m)>0$ with the following property: 
for every ${\bf g}\in \mathcal G_m$, we have 
\[ C_1(\varepsilon, I_{2m} ,{\bf g}) \geq C_1,\ \ C_2(x,{\bf g})\leq C_2,\ \ C_3({\bf g})\leq C_3. \] 
The elements $g_m$ and $h_m$ will be of the form 
\begin{equation} \label{eq: definition} g_m = \overline{g_m} \circ l^m\ \ \ \mathrm{and}\ \ \ h_m = \overline{h_m} \circ l^m ,\end{equation} 
where $\overline{g_m}$ and $\overline{h_m}$ are chosen in ${\bf l_n}(\mathcal G_m)$. 

Denote by $|I_m|$ the length of $I_m$, and choose an integer $m_0$ such that $|I_m|^{\tau} C_2^{\tau} C_3 < 1/\tau e$ for $m\geq m_0$. Then the equation  
\begin{equation} 
\label{eq: definition of kappa}
\kappa_m ^{1/\tau} \exp(-\kappa_m) = |I_m| C_2 C_3^{1/\tau} 
\end{equation}
has two positive solutions; let $\kappa_m>0$ be the smallest of these solutions. Observe that the sequence $\kappa_m$ converges to $0$ when $m$ tends to infinity, since 
$C_2$ and $C_3$ do not depend on $m$, and the length of $I_m$ tends to $0$. By lemma~\ref{L: affine distortion}, any pair of elements $\overline{g_m}$ and $\overline{h_m}$ in ${\bf l_n} (\mathcal G_m)$ have a controlled distortion on $I_{m}$, namely
\[ \kappa (\overline{g_m}, I_m) \leq \kappa_m,\ \ \kappa (\overline{h_m}, I_m) \leq \kappa_m .\]  
Because $l^m$ is an affine map on $I$, we deduce 
\begin{equation}\label{eq: distortion condition} \kappa (g_m, I) \leq \kappa_m,\ \ \kappa (h_m, I) \leq \kappa_m . \end{equation}
Thus the condition 1) is automatically satisfied if $g_m$ and $h_m$ are chosen as in~\eqref{eq: definition}. 

Let us now indicate how to construct $\overline{g_m}$ and $\overline{h_m}$ so that the conditions 2) and 3) are also satisfied. For every ${\bf g} \in \mathcal G_m$, and every $n\geq 0$, we have 
\[ \frac{3\lambda n}{2} -\log C_2\leq \log {\bf l_n} '(0) \leq \frac{\lambda n }{2} +\log C_2 .\] 
Let us consider a covering of the interval $[\frac{3\lambda n}{2} -\log C_2, \frac{\lambda n }{2} +\log C_2]$ by at most \begin{equation} \label{eq: number of intervals} N = m \lceil |\lambda| n + 2\log C_2 \rceil \end{equation} 
intervals $I_{1},\ldots, I_N$ of length $1/m$. For every $k=1,\ldots , N$, let $G_{n,k}$ be the set of elements $g$ of ${\bf l_n} (\mathcal G_m)$ such that $\log g' (0)$ belongs to $I_k$. Then 
\[ {\bf l_n} (\mathcal G_m) = \bigcup _{k=1,\ldots, N} G_{n,k} ,\]
and if $k_n$ is the index such that the cardinal $|G_{n,k_n}|$ of $G_{n,k_n}$ is maximal, then 
\[ |G_{n,k_n} | \geq \frac{|{\bf l_n}(\mathcal G_m)|}{N}. \]
Since the measure of $\mathcal G_m$ is positive, the Shannon-Breiman-McMillan theorem shows that 
\[ \liminf_{n\rightarrow \infty} \frac{\log | {\bf l_n}(\mathcal G_m)| }{ n} \geq h(G,\mu),  \]
By \eqref{eq: number of intervals}, the number $N$ depends lineally of $n$, so that we get 
\begin{equation} \label{eq: entropy estimates}  \liminf_{n\rightarrow \infty} \frac{\log |G_{n,k_n}| }{ n} \geq h(G,\mu) .\end{equation}
Now since $G_{n,k_n} \subset {\bf l_n} (\mathcal G_m)$ and that $C_1(\varepsilon, I_{2m}, {\bf g})  \geq C_1$ for every ${\bf g}\in \mathcal G_m$, we have 
\begin{equation} \label{eq: measure estimates} \nu ( g ( I_{2m} ) ) \geq C_1 \exp ( -(h_{\nu} + \varepsilon ) n) \end{equation}
for every $g\in G_{n,k_n}$. Because $h_{\nu} + \varepsilon < h(G,\mu)$, we deduce from~\eqref{eq: entropy estimates} and \eqref{eq: measure estimates} that 
\[ \sum _{g\in G_{n,k_n} } \nu ( g( I_{2m} )) >1, \]
if $n$ is sufficiently large. For such values of $n$, the pigeon hole principle shows that there exist two distinct elements $\overline{g_m}$ and $\overline{h_m}$ of $G_{n,k_n}$ such that 
$$ \overline{g_m} ( I_{2m} )\cap \overline{h_m} (I_{2m} ) \neq \emptyset .$$ 
Observe also that the numbers $\log \overline{g_m}'(0)$ and $\log \overline{h_m}'(0)$ both belong to the interval $I_{k_n}$, whose length is  less than $1/m$, so that we have 
\[ \lvert \log \frac{\overline{g_m}'(0)}{\overline{h_m}'(0)} \rvert \leq \frac{1}{m}.  \]
Hence, by setting as before $g_m := \overline{g_m} \circ l^m$ and $h_m := \overline{h_m} \circ l^m$, we have
\[ g_m ( I_{m} )\cap h_m (I_{m} ) = \overline{g_m} ( I_{2m} )\cap \overline{h_m} (I_{2m} ) \neq \emptyset , \]
and
\[ \lvert \log \frac{g_m'(0) }{h_m'(0)} \rvert =  |\log \frac{\overline{g_m}'(0)}{\overline{h_m}'(0)} | \leq \frac{1}{m}.  \]
Together with \eqref{eq: distortion condition}, this establishes the lemma. 
\end{proof}

We claim that if $(g_m)_{m\geq m_0}$ and $(h_m)_{m\geq m_0}$ are the sequences given by lemma~\ref{L: construction of a sequence}, then when $m$ tends to infinity, $h_m^{-1} \circ g_m$ converges to the identity in the $C^1$-topology on every compact subset contained in the interior of $I$, in particular on $[-\eta/2, \eta /2]$.  Since every element $h_m^{-1}\circ g_m$ is not the identity, this will prove that the group $G$ is not strongly locally discrete, and will conclude the proof of the first part of theorem~\ref{T: poisson boundary}.

Observe first that for every $x,y\in I$ such that $x<y$, we have the inequalities 
\begin{equation}\label{eq: distortion}  \exp (-\kappa_m ) g_m '(0) (y-x) \leq g_m (y) - g_m (x) \leq \exp (\kappa_m ) g_m '(0) (y-x).  \end{equation}
Choose two points $u\in I_m$ and $v\in I_m$ such that $g_m (u)$ and $h_m (v) $ are equal and denote by $z$ their common value. We get 
\[ g_m (\eta ) \geq z + \exp (-\kappa_m ) g_m '(0) (\eta - u) \geq z + \exp(-\kappa_m) g_m '(0) \eta (1-\alpha^m). \] 
Similarly, we have 
\[ g_m (-\eta) \leq z - \exp(-\kappa_m) g_m '(0) \eta (1-\alpha^m),\]
and  
\[ h_m (\eta) \geq z + \exp(-\kappa_m) h_m '(0) \eta (1-\alpha^m) \ \ \mathrm{and}\ \  h_m (-\eta) \leq z - \exp(-\kappa_m) h_m '(0) \eta (1-\alpha^m). \]
Hence, if we denote by $M$ the minimal value of $g_m ' (0) $ and $h_m '(0)$, $h_m (I) \cap g_m (I)$ contains the interval 
\[ J = [z - \exp(-\kappa_m) M \eta (1-\alpha^m), z + \exp(-\kappa_m) M \eta (1-\alpha^m) ] . \]
Denote by $\alpha_m< \beta_m$ and $\gamma_m< \delta_m $ some elements of $I$ such that $g_m ([\alpha_m , \beta_m]) = h_m ([\gamma_m , \delta_m ])=J$. By \eqref{eq: distortion},  we have 
\[  g_m (\beta_m ) - g_m (\alpha_m) \leq (\beta_m - \alpha_m) g_m '(0) \exp (\kappa_m),\]
and hence 
\[ \beta_m -\alpha_m \geq 2\eta \big( \exp (-2\kappa_m) \frac{M}{g_m'(0)} (1-\alpha^m) \big) \geq 2\eta c_m, \]
with $c_m := \exp (-2\kappa_m - \frac{1}{m})  (1-\alpha^m)$.  
Similarly 
\[ \delta_m - \gamma_m \geq 2\eta c_m. \]
Because $I$ has length $2\eta$, and that $c_n$ tends to $1$ when $m$ tends to infinity, this implies that $\alpha_m$ and $\gamma_m$ tend to $-\eta$, and that $\beta_m$ and $\delta_m$ tend to $\eta$. To conclude, observe that the map $h_m^{-1} \circ g_m$ is a diffeomorphism from $[\alpha_m, \beta_m]$ to $[\gamma_n, \delta_m]$ whose derivative is approximately $1$ on $[\alpha_m, \beta_m]$ up to a multiplicative error of $\exp (2\kappa_m + \frac{1}{m})$. Hence, the map $h_m^{-1}  \circ g_m$ tends to the identity in the $C^1$-topology on the compact subsets of the interior of $I$. First part of theorem~\ref{T: poisson boundary} is proved. 

\section{Logarithmic and projective distortions: proof of the second and third part of the theorem}

Suppose that the group $G$ acts by diffeomorphisms of class $C^2$ (resp. $C^3$) on the circle. 
In this section, we prove that if $h_{\nu} < h(G,\mu)$, and if the moment condition~\eqref{eq: logarithmic moment condition} (resp.~\eqref{eq: schwarzian moment condition}) is satisfied, then the group is not strongly locally discrete in the $C^2$-topology (resp. $C^3$-topology). This establishes the second and third part of theorem~\ref{T: poisson boundary}, using the entropy criterion, see section~\ref{S: entropy criterion}.

\begin{lemma}
\label{L: control of Schwarzian derivative} 
Suppose that the following moment condition holds: 
\[ \int |Lg|_{\infty} d\mu (g) < \infty\ \ \ \ \ (\text{resp.}\ \ \int |Sg|_{\infty} d\mu(g) <\infty). \]  
Then, for ${\mu}^{\bf N}$-a.e. ${\bf g}\in G^{\bf N}$, the number 
\[  C_4({\bf g}) :=\sum_{n\geq 0} |L g_{n+1}|_{\infty} \exp(\lambda n/2)\ \ \ \ \ (\text{resp.}\ \  C_4({\bf g}):= \sum_{n\geq 0} |S g_{n+1}|_{\infty} \exp(\lambda n) ) \]
is finite. Moreover, let $\kappa >0$, and $r=r(\kappa,x,{\bf g})>0$ be the positive number defined in lemma~\ref{L: affine distortion} with $\tau =1$.
For every $x\in {\bf S}^1$, and $\mu^{\bf N}$-a.e. ${\bf g}\in G^{\bf N}$ we have for every $n\geq 0$ and every $y\in[x-r,x+r]$, 
\[ |L{\bf l_n}(y)| < C_2(x,{\bf g}) C_4({\bf g}) \exp (\kappa)\ \ \ \ \ (\text{resp.}\ \  |S{\bf l_n}(y)| <  C_2(x,{\bf g})^2 C_4({\bf g}) \exp(2\kappa) ) .\]  
\end{lemma}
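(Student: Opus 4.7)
My plan has four steps, paralleling the structure of the proof of Lemma~\ref{L: affine distortion}.

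First, I would dispatch the almost-sure finiteness of $C_4({\bf g})$ by Fubini and monotone convergence: since the $g_n$'s are i.i.d.\ with law $\mu$,
\[
\int C_4({\bf g})\,d\mu^{\bf N}({\bf g}) \;=\; \Bigl(\sum_{n\geq 0}\exp(\lambda n/2)\Bigr)\int |Lg|_\infty\,d\mu(g) \;<\;\infty
\]
in the $C^2$ case, and analogously with $\exp(\lambda n)$ and $|Sg|_\infty$ in the $C^3$ case. Since $\lambda<0$, the geometric series converges and the moment hypothesis makes the whole integral finite, so $C_4({\bf g})<\infty$ for $\mu^{\bf N}$-a.e.\ ${\bf g}$.

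Second, I would invoke the cocycle identities for $L$ and $S$ under composition. From the chain rule $(g\circ h)' = g'(h)\cdot h'$ one gets
\[
L(g\circ h)(x) \;=\; Lg(h(x))\,h'(x) + Lh(x),
\qquad
S(g\circ h)(x) \;=\; Sg(h(x))\,h'(x)^2 + Sh(x).
\]
Iterating over ${\bf l_n} = g_n\circ\cdots\circ g_1$ with ${\bf l_0}=\id$ gives, for every $y$,
\[
L{\bf l_n}(y) \;=\; \sum_{k=0}^{n-1} Lg_{k+1}({\bf l_k}(y))\,{\bf l_k}'(y),
\qquad
S{\bf l_n}(y) \;=\; \sum_{k=0}^{n-1} Sg_{k+1}({\bf l_k}(y))\,{\bf l_k}'(y)^2.
\]

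Third, I would use Lemma~\ref{L: affine distortion} with $\tau=1$: for $y\in[x-r,x+r]$ and every $k\geq 0$,
$\kappa({\bf l_k},[x-r,x+r])\leq\kappa$, whence ${\bf l_k}'(y)\leq e^\kappa\,{\bf l_k}'(x)$, and combining with the upper bound \eqref{eq: constant C2} yields
\[
{\bf l_k}'(y)\;\leq\; e^\kappa\,C_2(x,{\bf g})\,\exp(k\lambda/2).
\]

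Finally, I would substitute the uniform bound on ${\bf l_k}'(y)$ into the telescoping expressions and bound $|Lg_{k+1}(\cdot)|$ and $|Sg_{k+1}(\cdot)|$ by their sup-norms:
\[
|L{\bf l_n}(y)| \;\leq\; e^\kappa\,C_2(x,{\bf g})\sum_{k\geq 0}|Lg_{k+1}|_\infty\exp(k\lambda/2)\;=\;e^\kappa\,C_2(x,{\bf g})\,C_4({\bf g}),
\]
and similarly
\[
|S{\bf l_n}(y)| \;\leq\; e^{2\kappa}\,C_2(x,{\bf g})^2\sum_{k\geq 0}|Sg_{k+1}|_\infty\exp(k\lambda)\;=\;e^{2\kappa}\,C_2(x,{\bf g})^2\,C_4({\bf g}).
\]
There is no real obstacle: the lemma is essentially a bookkeeping computation once one has the cocycle formulas for $L$ and $S$ and the affine-distortion control from Lemma~\ref{L: affine distortion}. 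The only place care is needed is checking that the exponential weights in the definition of $C_4$ in the two cases ($\exp(\lambda n/2)$ versus $\exp(\lambda n)$) precisely cancel the $k$-th term in the corresponding cocycle sum, which they do because the Schwarzian picks up the square of ${\bf l_k}'(y)$.
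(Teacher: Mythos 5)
Your proposal is correct and follows exactly the paper's argument: almost-sure finiteness of $C_4$ via Fubini, the cocycle identities for $L$ and $S$ under composition, and the bound ${\bf l_k}'(y)\leq e^\kappa C_2(x,{\bf g})\exp(k\lambda/2)$ from Lemma~\ref{L: affine distortion}, combined term by term. The paper's proof is just a terser version of the same computation, so nothing further is needed.
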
 

\begin{proof} By lemma~\ref{L: affine distortion}, for $\mu^{\bf N}$-a.e. ${\bf g}\in G^{\bf N}$, for every $y\in [x-r,x+r]$ and every integer $n\geq 0$, 
\[  {\bf l_n} '(y) \leq \exp (\kappa ) {\bf l_n} '(x) \leq C_2(x,{\bf g}) \exp(\kappa + n\lambda /2 ) .\]
Moreover, because 
\[ \int C_4({\bf g}) d\mu^{\bf N} = \big(\int |L g|_{\infty}d\mu(g) \big) \sum_n \exp(\lambda n/2)<\infty,\]
 for $\mu^{\bf N}$-a.e. ${\bf g}\in G^{\bf N}$, $C_4({\bf g})$ is finite. The same argument works in case of the Schwarzian derivative assumption. 
The lemma follows from the following relations satisfied by the logarithmic and Schwarzian derivatives of composition of maps:
\[ L {\bf l_n} =  L g_1 + {\bf l_1}' L g_2 \circ {\bf l_1} + \ldots + {\bf l_{n-1}}' L g_n \circ {\bf l_{n-1}} \]
and
\[ S {\bf l_n} =  S g_1 + ({\bf l_1}')^2 S g_2 \circ {\bf l_1} + \ldots + ({\bf l_{n-1}}')^2 Sg_n \circ {\bf l_{n-1}}.\] 
\end{proof} 

In the sequel, we use the same notations as the one used in section~\ref{S: first part of the theorem}.
First, recall that if $l$ is of class $C^2$ (resp. $C^3$), then the germ of diffeomorphism 
which conjugates $l$ to a linear map is also of class $C^2$ (resp. $C^3$), see~\cite{yoccoz} or~\cite[Theorem 3.6.2]{N}. Thus, we can suppose that $x=0$ and 
$l(y)=\alpha y$ for $y\in I$, the action being $C^2$ (resp. $C^3$). Then the following variation of lemma~\ref{L: construction of a sequence} holds: 

\begin{lemma}
There exists an integer $m_0 \geq 0$ such that for every $m\geq m_0$, there exist two distinct elements $g_m$ and $h_m$ of $G$ such that the conditions 1), 2) and 3) of lemma~\ref{L: construction of a sequence} holds, and such that, moreover, the logarithmic (resp. Schwarzian) derivative of $g_m$ and $h_m$ tends to $0$ uniformly on $I$ when $m$ tends to infinity. 
\end{lemma}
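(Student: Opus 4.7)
The plan is to repeat the construction of Lemma~\ref{L: construction of a sequence} almost verbatim, insisting only that the set $\mathcal{G}_m\subset G^{\bf N}$ be cut down slightly so that the random constant $C_4({\bf g})$ of Lemma~\ref{L: control of Schwarzian derivative} is also uniformly bounded, say $C_4({\bf g})\leq C_4$, on it. Because the moment assumption~\eqref{eq: logarithmic moment condition} (respectively~\eqref{eq: schwarzian moment condition}) forces $C_4({\bf g})$ to be finite $\mu^{\bf N}$-almost surely, such a $\mathcal{G}_m$ still has positive measure. Building $g_m=\overline{g_m}\circ l^m$ and $h_m=\overline{h_m}\circ l^m$ from this refined $\mathcal{G}_m$ exactly as in the proof of Lemma~\ref{L: construction of a sequence}, properties 1), 2), 3) follow without any change.

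The new content will then come from the standard composition identities
\[ L(f\circ g)=Lg+g'\cdot (Lf)\circ g,\qquad S(f\circ g)=Sg+(g')^2\cdot (Sf)\circ g. \]
Since $l^m$ is affine on $I$ with constant derivative $\alpha^m$, both $L(l^m)$ and $S(l^m)$ vanish identically on $I$, and the identities collapse to
\[ Lg_m(y)=\alpha^m\,(L\overline{g_m})(l^m(y)),\qquad Sg_m(y)=\alpha^{2m}\,(S\overline{g_m})(l^m(y))\qquad(y\in I). \]
The factor $\alpha^m$ (respectively $\alpha^{2m}$) is what will supply the decay; all that remains is to bound $(L\overline{g_m})\circ l^m$ (respectively $(S\overline{g_m})\circ l^m$) uniformly in $m$ on $I$.

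That bound is precisely what Lemma~\ref{L: control of Schwarzian derivative} delivers, applied at $x=0$ with a fixed $\kappa>0$. The radius $r=r(\kappa,0,{\bf g})$ depends only on $\kappa$, $C_2(0,{\bf g})$ and $C_3({\bf g})$, all uniform on $\mathcal{G}_m$, so $r$ is bounded below by some $r_0>0$ independent of $m$; and since $|I_m|=2\eta\alpha^m\to 0$, the inclusion $l^m(I)=I_m\subset[-r_0,r_0]$ holds for all sufficiently large $m$. Lemma~\ref{L: control of Schwarzian derivative} then yields $|L\overline{g_m}|_{\infty,I_m}\leq C_2 C_4 e^{\kappa}$ (respectively $|S\overline{g_m}|_{\infty,I_m}\leq C_2^{2}C_4 e^{2\kappa}$), so that on $I$
\[ |Lg_m|_{\infty}\leq \alpha^m\,C_2 C_4 e^{\kappa},\qquad |Sg_m|_{\infty}\leq \alpha^{2m}\,C_2^{2}C_4 e^{2\kappa}, \]
with the same bounds holding for $h_m$. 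Both tend to $0$ as $m\to\infty$, which is the required assertion. I do not foresee any real obstacle: once the composition formulas are written and the linearisation of $l$ is in force, the vanishing of $L(l^m)$ (respectively $S(l^m)$) on $I$ frees the factor $\alpha^m$ (respectively $\alpha^{2m}$) that does all the work; the only mild care is the packaging of the additional uniform bound $C_4$ into the set $\mathcal{G}_m$.
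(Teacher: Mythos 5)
Your proposal is correct and follows essentially the same route as the paper: enlarge the defining conditions of $\mathcal G_m$ by the uniform bound $C_4({\bf g})\leq C_4$ (still of positive measure), run the construction of Lemma~\ref{L: construction of a sequence} unchanged, and then use the affinity of $l^m$ on $I$ together with the bound from Lemma~\ref{L: control of Schwarzian derivative} to extract the decaying factor $\alpha^m$ (resp.\ $\alpha^{2m}$). The only cosmetic difference is that the paper uses the $m$-dependent distortion constants $\kappa_m\to 0$ where you fix a single $\kappa$, which changes nothing.
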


\begin{proof} 
The proof is essentially the same as the proof of lemma~\ref{L: construction of a sequence}. 
Modify the definition of the sets $\mathcal G_m$ in the following way: for any integer $m\geq 0$, 
we choose constants $C_1,C_2,C_3, C_4>0$ such that $\mathcal G_m$ is the set consisting of elements 
${\bf g}\in G^{\bf N}$ such that 
\[  C_1(\varepsilon, J_m, {\bf g})\geq C_1,\ \ C_2(x,{\bf g})\leq C_2,\ \ C_3({\bf g})\leq C_3\ \ \mathrm{and}\ \ C_4({\bf g})\leq C_4,\]
and $\mu^{\bf N}(\mathcal G_m)>0$. Then similarly as in~\ref{L: construction of a sequence}, we construct an integer $m_0$ and two sequences $(g_m)_{m\geq m_0}$ and   
$(h_m)_{m\geq m_0} $ of elements of $G$, such that the properties 1), 2) and 3) of lemma~\ref{L: construction of a sequence} are satisfied. 

We then verify that the logarithmic (resp. Schwarzian) derivatives of the maps $g_n$ and $h_m$ converges uniformly to $0$ on $I$. To do so, recall that $g_n = \overline{g_n} \circ l^m $ and $h_m= \overline{h_m} \circ l^m$ where $\overline{g_m}$ and $\overline{h_m}$ are elements of ${\bf l_n} (\mathcal G_m)$. In particular, lemma \ref{L: control of Schwarzian derivative} shows that for every $y\in [-r, r]$, we have 
\[ L \overline{g_m} (y) \leq C_2 C_4 \exp (\kappa_m ) \ \ \ \ \ (\text{resp.}\ \  S \overline{g_m} (y) \leq C_2 ^2 C_4 \exp (2\kappa_m)).  \]   
We deduce that if $m$ is sufficiently large so that $\alpha^m \eta\leq r$, then for every $y\in I$: 
\[  L g_m (y) = \alpha^m \big((L \overline{g_m})\circ l^m \big)(y) \leq \alpha^{m} C_2 C_4 \exp(\kappa_m)\rightarrow_{m\rightarrow \infty} 0\]
in the case condition~\eqref{eq: logarithmic moment condition} is satisfied, and 
\[  S g_m (y) = \alpha^{2m} \big((S \overline{g_m})\circ l^m \big)(y) \leq \alpha^{2m}  C_2^2 C_4 \exp(2\kappa_m)\rightarrow_{m\rightarrow \infty} 0,\]
in the case condition~\eqref{eq: schwarzian moment condition} is satisfied. 
The same reasoning applies to the sequence $(h_m)_{m\geq m_0}$. 
This ends the proof of the lemma. \end{proof}

We proved in section~\ref{S: first part of the theorem} that the sequence $\varphi_m= h_m^{-1} \circ g_m$ converges to the identity in the $C^1$-topology in restriction to $[-\eta/2,\eta/2]$, 
when $m$ tends to infinity. Moreover, we have for every $y\in [-\eta/2,\eta/2]$,  
\[ L \varphi_m (y) = L g_m (y) - \frac{(g_m)'(y)}{(h_m)'(h_m)^{-1}(x))} L h_m (g_m(x)) ,\]
and respectively 
\[  S \varphi_m (y) = S g_m (y) - \big(\frac{(g_m)'(y)}{(h_m)'(h_m)^{-1}(x))}\big)^2 S h_m (g_m(x)) , \]
which proves that $L\varphi_m$ (resp. $S \varphi_m$) converges uniformly to $0$ on $[-\eta/2,\eta/2]$. In the case where condition~\eqref{eq: logarithmic moment condition} is satisfied we deduce immediately that $\varphi_m''$ converges to $0$ uniformly on $[-\eta/2, \eta/2]$, and hence that the restriction of $\varphi_m$ to $[-\eta/2,\eta/2]$ converges to the identity in the $C^2$-topology. The second part of the theorem is thus proved. The third part of the theorem follows from the following

\begin{lemma} \label{L: convergence to identity}
Let $I\subset {\bf S}^1$ be a closed interval and $\varphi_m:I\rightarrow {\bf S}^1$ be a sequence of diffeomorphisms of class $C^3$ which converges to the identity
in the $C^1$-topology when $m$ tends to infinity, and such that $S \varphi_m$ tends uniformly to $0$. Then 
$\varphi_m$ converges to the identity in the $C^3$-topology.
\end{lemma}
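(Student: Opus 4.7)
The plan is to introduce the standard ``Schwarzian substitution'' $w_m = (\varphi_m')^{-1/2}$, which turns the Schwarzian into a linear second-order quantity, and then peel off one derivative at a time to climb from $C^1$ to $C^2$ and from $C^2$ to $C^3$.

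A direct computation using $\log w_m = -\tfrac12 \log\varphi_m'$ gives the identity
\[
w_m'' \;=\; -\tfrac12\, w_m\, S\varphi_m,
\]
so $w_m$ satisfies a linear ODE whose inhomogeneity is $S\varphi_m \to 0$ uniformly. Since $\varphi_m'\to 1$ uniformly on $I$ by the $C^1$ hypothesis, we have $w_m \to 1$ uniformly on $I$; in particular $\{w_m\}$ is uniformly bounded, hence $w_m''\to 0$ uniformly on $I$.

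The first main step is to deduce that $w_m' \to 0$ uniformly on $I$. Writing $I=[a,b]$, the mean value theorem yields a point $\xi_m\in I$ with $w_m'(\xi_m)=(w_m(b)-w_m(a))/(b-a)$, and this tends to $0$ since $w_m\to 1$ in $C^0$. Then for every $x\in I$,
\[
|w_m'(x)| \;\le\; |w_m'(\xi_m)| + |x-\xi_m|\,\|w_m''\|_{\infty,I} \;\longrightarrow\; 0.
\]
Now $w_m'/w_m = -\tfrac12 L\varphi_m$ and $w_m\to 1$ uniformly, so $L\varphi_m \to 0$ uniformly on $I$. Multiplying by $\varphi_m'$ (which converges to $1$ uniformly) gives $\varphi_m''\to 0$ uniformly, so $\varphi_m\to\mathrm{id}$ in the $C^2$-topology.

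The second step upgrades this to $C^3$. From $S\varphi_m = (L\varphi_m)' - \tfrac12 (L\varphi_m)^2$ together with $S\varphi_m\to 0$ and $L\varphi_m\to 0$ uniformly, one gets $(L\varphi_m)'\to 0$ uniformly. On the other hand, differentiating $L\varphi_m=\varphi_m''/\varphi_m'$ yields
\[
(L\varphi_m)' \;=\; \frac{\varphi_m'''}{\varphi_m'} \;-\; (L\varphi_m)^2,
\]
so
\[
\varphi_m''' \;=\; \varphi_m'\bigl((L\varphi_m)' + (L\varphi_m)^2\bigr) \;\longrightarrow\; 0
\]
uniformly on $I$, completing the proof.

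The only step that is not a routine rearrangement is the passage $w_m''\to 0$, $w_m\to 1 \Rightarrow w_m'\to 0$, and the slight subtlety there is ensuring that the boundary information (namely that $w_m$ has small oscillation across $I$) is used to pin down a value of $w_m'$ rather than merely controlling its derivative; the mean-value argument above does exactly this and is the heart of why having $C^1$-control on $\varphi_m$ is enough to promote $S\varphi_m\to 0$ into full $C^3$-convergence.
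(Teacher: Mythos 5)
Your proof is correct, and it takes a genuinely more direct route than the paper's. Both arguments ultimately rest on the same linearization of the Schwarzian derivative: the paper represents the map (after normalizing by an osculating M\"obius transformation $A_m$ at a point $x_m$ where $\varphi_m''(x_m)\to 0$) as a quotient $u_m/v_m$ of solutions of $u''+\tfrac{S}{2}u=0$ and invokes continuous dependence of ODE solutions on parameters to get $u_m\to y$, $v_m\to 1$, from which explicit formulas for the first three derivatives finish the job; you instead observe that $w_m=(\varphi_m')^{-1/2}$ --- which is essentially the paper's $v_m$ before any normalization --- satisfies $w_m''=-\tfrac12 w_m\,S\varphi_m$ directly, and you extract the needed control on $w_m'$ by the mean value theorem plus one integration of $w_m''$. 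Note that both proofs use the $C^1$ hypothesis in the same essential way, to pin down second-order data at a single point (your $\xi_m$ with $w_m'(\xi_m)\to 0$ plays exactly the role of the paper's $x_m$ with $\varphi_m''(x_m)\to 0$). Your version dispenses with the M\"obius normalization and with the black box of parameter-dependence for linear ODEs, so it is more elementary and self-contained; the paper's version is perhaps more conceptual in that it exhibits $\varphi_m$ as a perturbation of a projective map. Two cosmetic remarks: you should note that $w_m$ is only defined once $\varphi_m'>0$, which the $C^1$ hypothesis guarantees for $m$ large (this suffices since only the tail matters), and the chain $S\varphi_m\to 0$, $L\varphi_m\to 0\Rightarrow(L\varphi_m)'\to 0\Rightarrow\varphi_m'''\to 0$ in your last step is exactly right and closes the argument.
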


\begin{proof} Write $I=[x_-,x_+]$. For every $m$ there exists a point $x_m\in I$ such that  
\[  \frac{\varphi_m''(x_m)}{\varphi_m'(x_m)} = \frac{1}{|I|} \int_I (\log \varphi_m ')'(u) du = \frac{1}{|I|} (\log \varphi_m'(x_+)  -\log \varphi_m'(x_-)),\]
so that because the derivative $\varphi_m'$ converges uniformly to the constant $1$, the sequence of numbers $\varphi_m''(x_m)$ converges to $0$. 

Let $A_m\in \mathrm{PGL}(2,{\bf R})$ be the M\"obius map with the same Taylor development as the map $\varphi_m$ 
at the point $x_m$, untill the second order. 
Because $A_m$ is determined by the three numbers $\varphi_m(x_m)$, $\varphi_m'(x_m)$ and $\varphi_m''(x_m)$, which tend respectively 
to $x_m$, $1$ and $0$, the matrix $A_m$ converge to the identity up to multiplication by a constant, hence the map $A_m$ tends to the identity in the $C^{\infty}$-topology on $I$. 

Consider the map $k_m (y) : = -x_m + A_m ^{-1} \circ \varphi_m ( x_m + y )$, defined for $y\in -x_m + I $. 
We have 
\[  k_m(0)=0,\ k_m'(0)=1\ \ \mathrm{and}\ \ k_m''(0) =0.\]
Because the Schwarzian 
derivative is invariant by composition on the right by translations, and on the left by projective transformations, 
we get for every $y\in -x_m  +I$
\[ S k_m (y) = S \varphi_m (x_m + y) \rightarrow _{m\rightarrow \infty} 0\]
uniformly. Consider the two solutions $u_m$ and $v_m$ of the following second order differential 
equation 
\[  \frac{d^2u}{dy^2} + \frac{S k_m}{2}  u=0,\]
with initial conditions 
\[ u_m(0)=0,\ (u_m)'(0)=1,\ v_m(0)=1,\ (v_m)'(0)=0.\]
The classical theory of the Schwarzian derivative tells us that 
\[  k_m = \frac{u_m}{v_m}.\]
But, by continuous dependance of the solutions of linear differential equations with respect to parameters, and since $S k_m$ tends uniformly to $0$,
$u_m$ converges uniformly to the function $y$, and $v_m$ converges uniformly 
to the function $1$, when $m$ tends to infinity. Then simple computations using the fact 
that $u_m' v_m - v_m' u_m =1$ show 
\[ k_m '(y) = \frac{1}{(v_m)^2},\ \ \ k_m'' = -2\frac{(v_m)'}{(v_m)^3},\ \ \ k_m''' = \frac{S k_m}{(v_m)^2}+ 6\frac{((v_m)')^2}{(v_m)^4} . \] 
But the function $(v_m)'$ converges uniformly to a $0$ since its derivative $-S k_m v_m$
tends to $0$ uniformly and $v_m'(0)= 0$.  This proves that $k_m$ tends to the identity in the $C^3$-topology, and hence so is $\varphi_m$. 
\end{proof} 

\section{The analytic case}

The proof of the fourth part of theorem~\ref{T: poisson boundary} is essentially the same as the proofs of the first and second parts. It uses variations in the complex domain of the techniques of affine distortions developed in section~\ref{S: first part of the theorem}. 

If $g$ is a diffeomorphism from an open subset $D\subset {\bf C}/{\bf Z} $ to an open subset contained in ${\bf C}/{\bf Z}$, we extend the definition~\eqref{def: distortion} to the complex case by 
$$\kappa ( g, D ) := \sup _{x,y\in D} \log \frac{|g'(y)|}{|g'(x)|}.$$ 
The following result is the main distortion estimates that will be needed to treat the analytic case.

\begin{lemma} 
Suppose that 
\[ \int \frac{1}{\rho (g)} d\mu(g) < \infty \ \ \mathrm{and} \ \ \int |(\log g')' |_{\infty, A_{\rho(g)/2}} d\mu(g) < \infty .  \]
For every ${\bf g} \in G^{\bf N}$, we denote 
\[ C_3 = C_3({\bf g}) := \sum _{n\geq 0} \exp (\lambda n /2) \cdot |(\log g_{n+1}')' |_{\infty, A_{\rho( g_{n+1} )/2}} \exp (\lambda n /2)\] 
and
\[ C_5 = C_5({\bf g}): = \inf _{n\geq 0} \rho( g_n ) \exp(-\lambda n /2) .\]
These numbers are finite for $\mu^{\bf N}$-a.e. ${\bf g}$. Let $\kappa >0$ be any number and define 
\[ r : = \min \big( \frac{C_5}{2 \exp (\kappa) C_2 } , \frac{\kappa}{2 \exp(\kappa) C_2 C_3 } \big). \]
Then for every $n\geq 0$ the map ${\bf l_n}$ is well-defined on $D(x,r )$ and its distortion is bounded by 
\begin{equation}\label{eq: control of distortion 1} \kappa ( {\bf l_n}, D(x,r) ) \leq \kappa . \end{equation}
\end{lemma}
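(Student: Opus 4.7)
The strategy is to adapt the proof of Lemma~\ref{L: affine distortion} to the complex analytic category, working with holomorphic extensions on complex disks in place of real H\"older seminorms. I would first verify that $C_3<\infty$ and $C_5>0$ almost surely, and then establish the distortion bound by induction on $n$.

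The a.s.\ finiteness of $C_3$ follows by integration: since $\lambda<0$, the geometric factors are summable, and the integrability hypothesis $\int |(\log g')'|_{\infty, A_{\rho(g)/2}}\,d\mu(g)<\infty$ yields $\int C_3\,d\mu^{\bf N}<\infty$. Positivity of $C_5$ needs slightly more work. Markov's inequality combined with $\int 1/\rho(g)\,d\mu(g)<\infty$ gives $\mu(\{g:1/\rho(g)>e^{|\lambda|n/3}\})\leq C e^{-|\lambda|n/3}$, which is summable in $n$. Borel--Cantelli then shows that $\mu^{\bf N}$-a.s., $\rho(g_n)\geq e^{-|\lambda|n/3}$ for all $n$ large enough, so that $\rho(g_n)\exp(-\lambda n/2)\to+\infty$, and the infimum defining $C_5$ is a.s.\ positive.

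The distortion bound is proved by induction on $n$, the case $n=0$ being immediate. Assume that ${\bf l_k}$ extends holomorphically to $D(x,r)$ with $\kappa({\bf l_k},D(x,r))\leq\kappa$ for every $k\leq n$. The definition of $C_2(x,{\bf g})$ and the distortion hypothesis together give $|{\bf l_k}'(z)|\leq e^{\kappa}C_2\exp(\lambda k/2)$ on $D(x,r)$. Integrating along the straight segment from $x$ to $y\in D(x,r)$ yields $|{\bf l_k}(y)-{\bf l_k}(x)|\leq r\,e^{\kappa}C_2\exp(\lambda k/2)$. The first clause of the definition of $r$, combined with the lower bound on $\rho(g_{k+1})$ coming from $C_5$, shows that this radius is at most $\rho(g_{k+1})/2$; thus ${\bf l_k}(D(x,r))\subset A_{\rho(g_{k+1})/2}$, the domain on which $g_{k+1}$ is holomorphic. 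It follows that ${\bf l_{n+1}}=g_{n+1}\circ{\bf l_n}$ extends holomorphically to $D(x,r)$.

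Applying the chain rule and the fundamental theorem of calculus along segments in the complex plane gives the telescoping estimate
\[ \log\left|\frac{{\bf l_{n+1}}'(y)}{{\bf l_{n+1}}'(x)}\right|\leq\sum_{k=0}^{n}|(\log g'_{k+1})'|_{\infty, A_{\rho(g_{k+1})/2}}\cdot r\,e^{\kappa}C_2\exp(\lambda k/2), \]
whose right-hand side is bounded by $r\,e^{\kappa}C_2\,C_3\leq\kappa/2\leq\kappa$ by the second clause in the definition of $r$, completing the induction. The main obstacle is balancing two competing requirements: keeping each intermediate image ${\bf l_k}(D(x,r))$ inside the holomorphic extension domain $A_{\rho(g_{k+1})/2}$ of $g_{k+1}$, while also keeping the product of derivative ratios bounded by $e^\kappa$. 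The definition of $r$ as a minimum of two carefully tuned quantities is precisely what resolves this tension.
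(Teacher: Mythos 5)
Your proof follows the same induction as the paper's, using the two clauses in the definition of $r$ for exactly the same two purposes: the first keeps each image ${\bf l_k}(D(x,r))$ inside the domain of holomorphy of $g_{k+1}$, and the second bounds the telescoped sum of logarithmic-derivative increments by $\kappa/2$. The only divergence is your Borel--Cantelli argument for $C_5$, which is in fact more careful than the paper's one-line integrability assertion and correctly identifies that positivity (not mere finiteness) of $C_5$ is the property actually needed for $r$ to be well-defined.
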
 

\begin{proof} The fact that $C_3({\bf g})$ and $C_5({\bf g})$ are finite almost everywhere comes from the fact that 
\[ \int C_3 ({\bf g}) d\mu^{\bf N} ({\bf g}) < \infty \ \ \mathrm{and} \ \ \int C_5 ({\bf g}) d\mu^{\bf N} ({\bf g}) < \infty.\]
The proof of the second part is by induction. Suppose that ${\bf l_n}$ is well-defined on $D(x,r)$ and that \eqref{eq: control of distortion 1} holds for every $n= 0,\ldots , N$. For $N=0$ this holds because ${\bf l_0}$ is the identity. Because the distortion of the map ${\bf l_N}$ is bounded by $\kappa$ on $D(x,r)$, and that $|{\bf l_N}' (x)| \leq C_2 \exp (\lambda N /2)$, the set ${\bf l_N} ( D(x,r) ) $ is contained in the annulus $A _{\rho}$ with $\rho =  r C_2 \exp ( \kappa + \lambda N/2 )$. Thus, because
\[  r C_2 \exp ( \kappa + \lambda N/2 ) \leq C_5 \exp ( \lambda N /2) , \]
the map ${\bf l_{N+1}}$ is well-defined on $D(x,r)$. For every $y\in D(0, r)$ we have  
\[ \log \frac{|{\bf l_{N+1}}'(y)|}{|{\bf l_{N+1}}'(x)|} = \sum _{n= 0}^{N} \log \frac{|g_{n+1} '(y_n)|}{|g_{n+1}'(x_n)|}  ,\]
where as above $x_n= {\bf l_n} (x)$ and $y_n = {\bf l_n} (y)$. By our assumptions, the distance between $y_n$ and $x_n$ is less than $\exp (\kappa) C_2 \exp (\lambda n /2) r$. Because $\exp (\kappa) C_2 \exp (\lambda n /2) r \leq C_5 \exp (\lambda n /2) /2$ we get  $|y_n - x_n| \leq \rho (g_n) /2$, and so  
\[ \log \frac{|{\bf l_{N+1}}'(y)|}{|{\bf l_{N+1}}'(x)|} \leq \exp (\kappa) C_2 C_3 r \leq \kappa /2 . \]
Thus $\kappa ({\bf l_{N+1} }, D(x,r) ) \leq \kappa$ and the lemma is proved by induction.  
\end{proof}

Then the proof follows exactly the same strategy as in section~\ref{S: first part of the theorem}. We fix a hyperbolic fixed point $x \in {\bf S}^1$. The map $l$ which fixes $x$ and has derivative $<1$ is conjugated by an analytic map to the linear map $y\mapsto \alpha y$, by Koenig's theorem. We consider the coordinate $y$ in the neighborhood of $x$. Then lemma~\ref{L: construction of a sequence} holds true if $I$ is repaced by $D(x, \eta)$ and $I_m$ by $D(x, \eta \alpha ^m)$. By a reasoning analogous to the one at the end of section~\ref{S: first part of the theorem}, we deduce that the map $h_m^{-1} \circ g_m$ is defined on $\overline{D(x,\eta /2)}$ for $m$ large enough, and tends to the identity in the uniform topology when $m$ tends to infinity. We leave the details to the reader. This ends the proof of theorem~\ref{T: poisson boundary}.

\section{Open questions} 

We finish by listing some open questions related to this work:

\begin{enumerate} 

\item Is there a statement analogous to theorem~\ref{T: poisson boundary} in every regularity, namely $C^k$ for every $k\geq 2$? 

\item Is there a converse to theorem~\ref{T: poisson boundary}? Namely, if $G$ a finitely generated non elementary subgroup of $\mathrm{Diff} ({\bf S}^1) $ which is not strongly locally discrete, then is it true that $h_{\nu} < h(\mu)$? For instance, is it true that the circle is a strict boundary of Thompson's group $G$? \footnote{This would be interesting to have a positive answer to this question since it would prove that for $l=3,\omega$, if $G$ is a finitely generated and non elementary subgroup of $\mathrm{Diff}^l({\bf S}^1)$, the property of being $C^1$-strongly locally discrete is equivalent to the property of being $C^3$-strongly locally discrete (or $C^{\omega}$-locally discrete if $l=\omega$).}

\item Is there an example of a non elementary finitely generated group of diffeomorphisms of the circle which is discrete but not locally discrete in some smooth topology? 

\item Is the circle (equipped with the Lebesgue measure) the Poisson boundary of the Malliavin brownian motion on the group $\mathrm{Diff}^{3/2} ({\bf S}^1)$, see~\cite{M}?

\end{enumerate}

\vspace{0.35cm}

Bertrand Deroin

Universit\'e Paris-Sud \& CNRS, Lab. de Math\'ematiques, B\^at 425

91405 Orsay Cedex, France

Bertrand.Deroin@math.u-psud.fr

\vspace{0.3cm}

\end{small}


\begin{thebibliography}{99}

\bibitem{Antonov} {\sc V. Antonov.} Model of processes of cyclic evolution type. Synchronisation by a random signal. 
{\em Vestnik Leningrad Univ. Mat. Mekh. Astronom.} (1984) 67-76.

\bibitem{Avez} {\sc A. Avez.} Entropie des groupes de type fini. {\em C. R. Acad. Sci. Paris} Sér. A-B, 1972

\bibitem{Baxendale} {\sc P. Baxendale.} Lyapunov exponents and relative entropy for a stochastic flow of diffeomorphisms. {\em Prob. Theory Related Fields} 
{\bf 81} (1989), 521-554.
 
\bibitem{Chaperon} {\sc M. Chaperon.} Invariant manifolds revisited. {\em Proc. Steklov Inst. Math.} (2002), no. 1 (236), 415-433.  
 
 
\bibitem{DK} {\sc B. Deroin \& V. Kleptsyn.} Random conformal dynamical systems. 
{\em Geom. Funct. Anal.} {\bf 17} (2007), no. 4, 1043-1105. 

\bibitem{DKN} {\sc B. Deroin, V. Kleptsyn \& A. Navas.} Sur la dynamique unidimensionnelle
en r\'egularit\'e interm\'ediaire. {\em Acta Math.} {\bf 199} (2007), 199-262.


\bibitem{Derriennic} {\sc Y. Derriennic.} Entropie, th\'eor\`eme limite, et marches al\'eatories. 
{\em Lecture Notes in Math.} 1210, Springer, Berlin (1986).

\bibitem{ghys2} {\sc \'E. Ghys.} Groupes de diff\'eomorphismes engendr\'es par des \'el\'ements proches de l'identit\'e. {\em Bull. Braz Math. Soc.} (1993). 

\bibitem{Ghys} {\sc \'E. Ghys.} Groups acting on the circle. {\em Enseignement Math\'ematique} {\bf 47} (2001), 329-407.

\bibitem{GS} {\sc \'E. Ghys \& V. Sergiescu.} Sur un groupe 
remarquable de diff\'eomorphismes du cercle. {\em Comment. Math. Helv.} {\bf 62} (1987), no. 2, 185-239. 

\bibitem{Furstenberg} {\sc H. Furstenberg.} Boundary theory and stochastic 
processes on homogeneous spaces., {\em Proc. Sympos. Pure Math.} {\bf 26}
AMS Providence R.I. (1973) 193-229.


\bibitem{Kaimanovich} {\sc V. Kaimanovich.} The Poisson Formula For Groups With Hyperbolic Properties. {\em Ann. of Math.} {\bf 152} (2000), 659-692.

\bibitem{KV} {\sc V. Kaimanovich \& A. Vershik.} Random walks on discrete groups: boundary and entropy. {\em Ann. Prob.}
{\bf 11} (1983), no. 3, 457-490. 

\bibitem{KleptsynNalski} {\sc V. Kleptsyn \& M. Nal'ski.} Convergence of orbits in random 
dynamical systems on the circle. {\em Funct. Anal. Appl.} {\bf 38} (2004), no. 4, 267-282.

\bibitem{nakai} {\sc I. Nakai.} Separatrices for non solvable dynamics on $({\bf C}, 0)$. 
{\em Ann. Inst. Fourier} {\bf 4} 2 (1994) 569-599

\bibitem{L1} {\sc F. Ledrappier.} Quelques propri\'et\'es des exposants 
caract\'eristiques. \'Ecole d'\'et\'e de St Flour XII-1982. 
Springer no 1097.

\bibitem{L2} {\sc F. Ledrappier.} Une relation entre entropie, 
dimension et exposant pour certaines marches al\'eatoires. 
CRAS t. 296 (1983) S\'erie I - 369. 

\bibitem{LR} {\sc F. Loray \& J. Rebelo.} Minimal rigid foliations by curves of ${\bf C} P^n$. {\em J. Eur. Math. Soc.}  {\bf 5} (2003), 147-201.

\bibitem{M} {\sc P. Malliavin.} The canonic diffusion above the diffeomorphism group of the circle. {\em CRAS} I Math 329 (1999) pp. 325--329

\bibitem{N} {\sc A. Navas.} Groups of circle diffeomorphisms. {\em Chicago Lecture Notes} (2010)

\bibitem{Rebelo} {\sc J. Rebelo.} A theorem of measurable rigidity in ${\rm Diff}^\omega({\bf S}^1)$. {\em  Ergodic Theory Dynam. Systems} {\bf  21}  (2001),  no. 5, 1525–1561.

\bibitem{Sternberg} {\sc S. Sternberg.} Local contractions and a theorem of Poincar\'e. 
{\em Amer. J. Math.} {\bf 79} (1957) 809-824. 

\bibitem{yoccoz} {\sc Yoccoz, J. C.} Centralisateurs et conjugaison diff\'erentiable des
diff\'eomorphismes du cercle. Petits diviseurs en dimension 1. {\em Ast\'erisque}
{\bf 231} (1995), 89-242.\\

\end{thebibliography}
\end{document}